%
%
%
%
\documentclass{amsart}
\usepackage{txfonts}
\usepackage{}

\newtheorem{theorem}{Theorem}[section]
\newtheorem{lemma}[theorem]{Lemma}
\newtheorem{corollary}[theorem]{Corollary}
\newtheorem{proposition}[theorem]{Proposition}

\theoremstyle{definition}

\theoremstyle{remark}
\newtheorem{remark}[theorem]{Remark}

\numberwithin{equation}{section}



\begin{document}

\title{On compact Riemannian manifolds with harmonic Weyl curvature}

\author{Hai-Ping Fu}
\address{Department of Mathematics,  Nanchang University, Nanchang 330031, P. R. China}
\email{mathfu@126.com}
\thanks{Supported by National Natural Science Foundations of China \#11761049,  Jiangxi Province
Natural Science Foundation of China \#20171BAB201001.}

\author{Hui-Ya He*}
\address{Department of Mathematical Sciences, Tsinghua University, Beijing 100084, P. R. China}
\email{hhy15@mails.tsinghua.edu.cn}

\subjclass[2000]{Primary 53C21; Secondary 53C20}



\keywords{Einstein manifold, harmonic Weyl curvature, Schouten tensor}

\begin{abstract}
 We give some rigidity theorems for an n$(\geq4)$-dimensional  compact Riemannian manifold with harmonic Weyl curvature, positive scalar curvature and positive constant $\sigma_2$. Moreover, when $n=4,$ we prove that a 4-dimensional compact locally conformally flat Riemannian manifold with positive scalar curvature and positive constant $\sigma_2$ is isometric to a quotient of the round $\mathbb{S}^4$.
\end{abstract}

\maketitle

\section{Introduction and main results}
Recall that an $n$-dimensional Riemannian manifold $(M^n, g)(n\geq4)$  is said to be  a manifold with harmonic Weyl curvature (resp. harmonic curvature)
if the divergence of its Weyl curvature tensor $W$ vanishes, i.e., $\delta W=0$ (resp. the divergence of its Riemannian curvature tensor $Rm$ vanishes, i.e., $\delta Rm=0$).
In view of the second Bianchi identity, we know that $M^n$ has harmonic Weyl curvature (resp. harmonic
curvature) if and only if the Schouten tensor (resp. the Ricci tensor) of $M^n$ is a Codazzi tensor. By the Bianchi identity, the scalar curvature on manifolds with harmonic curvature is constant.  $M^n$ has harmonic  curvature if and only if $M^n$ has harmonic Weyl curvature and constant scalar curvature. Thus, every Riemannian manifold with parallel Ricci tensor has harmonic
curvature. The locally conformally
flat Riemannian  manifolds and the products of Einstein manifolds are
also important examples of manifolds with harmonic Weyl curvature, however, the
converse does not hold (see \cite{{B}}, for example). According to the decomposition of the Riemannian curvature tensor, the metric with harmonic Weyl curvature is a natural candidate for this study  since one of the important problems in Riemannian geometry is to understand classes
of metrics that are, in some sense, close to being Einstein or having constant curvature.

 The complete manifolds  with
harmonic curvature have been studied in literature (e.g., \cite{{C},{Ca},{D},{F},{F2},{FP},{FX},{FX2},{HV},{IS},{K},{LZ},{PRS},{S},{Ta},{XZ}}).
Some isolation theorems of Weyl curvature tensor of positive Einstein manifolds are given in \cite{{Ca2},{FX2},{HV},{IS},{S}}, when its $L^{p}$-norm is small.
Some scholars  classify  conformally flat  manifolds  satisfying some curvature $L^{p}$-pinching conditions \cite{{Ca},{FP},{FX},{HV},{PRS},{XZ}}. Recently,
Tran \cite{T} obtain two rigidity results for a closed Riemannian manifold
with harmonic Weyl curvature, which are  a generalization of Tachibana's theorem for non-negative curvature operator \cite{Ta} and  integral gap result which extends Theorem 1.10  for manifolds with harmonic curvature in \cite{F2}. We are interested in some
pinching problems for compact Riemannian manifold  with harmonic Weyl curvature.

Now we  introduce the definition of the Yamabe constant. Given a compact  Riemannian manifold $(M^n, g)(n\geq3)$, we consider the Yamabe functional
$$Q_g\colon C^{\infty}_{+}(M^n)\rightarrow\mathbb{R}\colon f\mapsto Q_g(f)=\frac{\frac{4(n-1)}{n-2}\int_{M^n}|\nabla f|^2\mathrm{d}v_g+\int_{M^n} Rf^2\mathrm{d}v_g}{(\int_{M^n} f^{\frac{2n}{n-2}}\mathrm{d}v_g)^{\frac{n-2}{n}}},$$
where $R$ denotes the   scalar curvature of $M^n$.
It follows that $Q_g$ is bounded below by H\"{o}lder inequality. We set
$$Y(M^n, [g])=\inf\{Q_g(f)|f\in C^{\infty}_{+}(M^n)\}.$$
This constant $Y(M^n,[g])$ is an invariant of the conformal class of $(M^n, g)$, called the Yamabe constant.
The important works of Aubin, Schoen, Trudinger and Yamabe showed that the infimum in $\inf\{Q_g(f)|f\in C^{\infty}_{+}(M^n)\}$ is always achieved (see \cite{{A},{LP}}).
The Yamabe constant of a given compact manifold
is determined by the sign of scalar curvature \cite{A}. For a Riemannian manifold $(M^n, g)(n\geq3)$, we denote by $\sigma_2(A_g)$ the second elementary symmetric
function of the eigenvalues of the so-called Schouten tensor $A_g=Ric-\frac{R}{2(n-1)}g$ with respect to $g$. Hence
\begin{equation}2\sigma_2(A_g)=(trA_g)^2-|A_g|^2=\frac{(n-2)^2}{4n(n-1)}R^2-|\mathring{Ric}|^2,\end{equation}
where $\mathring{Ric}$ denotes the trace-free Ricci tensor.

In this note, we obtain the following rigidity theorems.

\begin{theorem}
Let $(M^n, g)(n\geq4)$  be an $n$-dimensional   compact Riemannian manifold  with harmonic Weyl curvature, positive scalar curvature and  positive constant $\sigma_2(A_g)$. If
\begin{equation}
|W|^2+\frac{2n^2(n-4)}{(n-2)^3}|\mathring{Ric}|^{2}
<\frac{16n}{{(n-2)^3}}\sigma_2(A_g),
\end{equation}
then $(M^n, g)$ is an Einstein manifold.
\end{theorem}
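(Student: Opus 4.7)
The plan is to establish a Bochner--Weitzenböck identity for $|\mathring{Ric}|^2$, integrate it over $M$, and use the constancy of $\sigma_2(A_g)$ together with the pinching hypothesis (1.2) to force $\mathring{Ric}\equiv 0$.

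First, since $\delta W=0$, the Schouten tensor $A_g$ is Codazzi, i.e., $\nabla_i(A_g)_{jk}=\nabla_j(A_g)_{ik}$, and its trace-free part coincides with $\mathring{Ric}$. Starting from the general identity $\tfrac{1}{2}\Delta|\mathring{Ric}|^2=|\nabla\mathring{Ric}|^2+\langle\Delta\mathring{Ric},\mathring{Ric}\rangle$, I would commute covariant derivatives using the Codazzi property, together with the decomposition of $Rm$ into its Weyl part and its Schouten part, to produce an identity of the schematic shape
\begin{equation*}
\tfrac{1}{2}\Delta|\mathring{Ric}|^2=|\nabla\mathring{Ric}|^2+c_0\langle\nabla^2 R,\mathring{Ric}\rangle+W_{ikjl}\mathring{Ric}^{ij}\mathring{Ric}^{kl}-c_1\,\mathrm{tr}(\mathring{Ric}^3)+c_2 R\,|\mathring{Ric}|^2
\end{equation*}
with dimensional constants $c_0,c_1,c_2$. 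I would then control the indefinite algebraic terms by the sharp pointwise Huisken-type inequalities
\begin{equation*}
|\mathrm{tr}(\mathring{Ric}^3)|\le \tfrac{n-2}{\sqrt{n(n-1)}}|\mathring{Ric}|^3,\qquad |W_{ikjl}\mathring{Ric}^{ij}\mathring{Ric}^{kl}|\le \sqrt{\tfrac{n-2}{2(n-1)}}\,|W|\,|\mathring{Ric}|^2,
\end{equation*}
followed by a Young-inequality splitting converting the $|W|\,|\mathring{Ric}|^2$ piece into a combination of $|W|^2$ and $|\mathring{Ric}|^4$.

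Next, I would exploit $\sigma_2(A_g)\equiv\text{const}$. From (1.1), $|\mathring{Ric}|^2=\tfrac{(n-2)^2}{4n(n-1)}R^2-2\sigma_2(A_g)$, and therefore $\nabla|\mathring{Ric}|^2=\tfrac{(n-2)^2}{2n(n-1)}R\,\nabla R$ and $\Delta|\mathring{Ric}|^2=\tfrac{(n-2)^2}{2n(n-1)}(R\Delta R+|\nabla R|^2)$. These relations let me (after one integration by parts on the manifold) replace the $\langle\nabla^2 R,\mathring{Ric}\rangle$ term by a multiple of $|\nabla R|^2$ and rewrite every occurrence of $R^2|\mathring{Ric}|^2$ in terms of $|\mathring{Ric}|^4$ plus a constant multiple of $|\mathring{Ric}|^2$, thereby eliminating all derivatives of $R$ and all explicit occurrences of $R$ from the integrated identity.

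Integrating the Bochner identity over $M$ then yields an inequality whose integrand can be rearranged into a nonpositive multiple of $|\mathring{Ric}|^2$ times exactly the quantity $\tfrac{16n}{(n-2)^3}\sigma_2(A_g)-|W|^2-\tfrac{2n^2(n-4)}{(n-2)^3}|\mathring{Ric}|^2$, plus a nonnegative gradient term from $|\nabla\mathring{Ric}|^2$. Under (1.2) this forces $\mathring{Ric}\equiv 0$, hence $g$ is Einstein. The main obstacle will be matching the very specific coefficients $\tfrac{2n^2(n-4)}{(n-2)^3}$ and $\tfrac{16n}{(n-2)^3}$ in (1.2): this rests on the optimal choice of the Young splitting of $|W|\,|\mathring{Ric}|^2$, a refined Kato-type inequality to absorb $|\nabla R|^2$ into $|\nabla\mathring{Ric}|^2$, and the precise interplay between $R^2$ and $|\mathring{Ric}|^2$ provided by $\sigma_2(A_g)=\text{const}$. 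The factor $(n-4)$ suggests that dimension four is critical and that a cross-term vanishes there, consistent with the separate treatment of the locally conformally flat $n=4$ case in the paper.
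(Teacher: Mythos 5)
Your overall strategy is the paper's: the Bochner formula for $|\mathring{Ric}|^2$ (Lemma 2.1), integration by parts of the Hessian term via the contracted second Bianchi identity $\mathring{R}_{ij,j}=\frac{n-2}{2n}R_{,i}$, the Kato-type inequality $|\nabla A|^2\geq|\nabla\,\mathrm{tr}A|^2$ supplied by the constancy of $\sigma_2(A_g)$ (Li--Simon, the paper's (3.1)), and the final substitution $R^2=\frac{4n(n-1)}{(n-2)^2}\bigl(2\sigma_2(A_g)+|\mathring{Ric}|^2\bigr)$, which is exactly where the factor $n-4$ comes from. The genuine gap sits at the step you yourself flag as ``the main obstacle'': estimating $W_{ikjl}\mathring{R}_{ij}\mathring{R}_{kl}$ and $\mathrm{tr}(\mathring{Ric}^3)$ \emph{separately} by Huisken's pointwise inequalities and then recombining by Young or Cauchy--Schwarz cannot reproduce the constants of (1.2). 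The paper's Lemma 2.6 estimates the specific combination $-W_{ijkl}\mathring{R}_{ik}\mathring{R}_{jl}+\frac{n}{n-2}\mathrm{tr}(\mathring{Ric}^3)$ \emph{jointly}: it is rewritten as $-\frac14\bigl\langle W+\frac{n}{2(n-2)}\mathring{Ric}\circledwedge g,\;\mathring{Ric}\circledwedge\mathring{Ric}\bigr\rangle$ via Kulkarni--Nomizu products, $\mathring{Ric}\circledwedge\mathring{Ric}$ is orthogonally decomposed, and a single Cauchy--Schwarz gives the quadratic-mean bound $\sqrt{\tfrac{n-2}{2(n-1)}}\bigl(|W|^2+\tfrac{2n}{n-2}|\mathring{Ric}|^2\bigr)^{1/2}|\mathring{Ric}|^2$.

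Your two separate bounds instead give $\bigl(\sqrt{\tfrac{n-2}{2(n-1)}}\,|W|+\sqrt{\tfrac{n}{n-1}}\,|\mathring{Ric}|\bigr)|\mathring{Ric}|^2$, which exceeds the joint bound by the cross term $2ab\,|W|\,|\mathring{Ric}|$, and no downstream splitting can undo that loss: the sharpest quadratic condition of the shape $|W|^2+\tfrac{2n}{n-2}|\mathring{Ric}|^2<\beta R^2$ that guarantees $\sqrt{\tfrac{n-2}{2(n-1)}}\,|W|+\sqrt{\tfrac{n}{n-1}}\,|\mathring{Ric}|<\tfrac{R}{n-1}$ has $\beta=\tfrac{1}{(n-1)(n-2)}$, exactly half of the paper's $\tfrac{2}{(n-1)(n-2)}$ in (3.2). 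Translating back through $\sigma_2$, your route proves only a weaker theorem with $\tfrac{8n}{(n-2)^3}\sigma_2(A_g)$ in place of $\tfrac{16n}{(n-2)^3}\sigma_2(A_g)$ (and a correspondingly larger coefficient on $|\mathring{Ric}|^2$); since Remark 1.2 asserts (1.2) is optimal for $\mathbb{S}^1\times\mathbb{S}^{n-1}$, the factor of two is not recoverable by bookkeeping. The missing ingredient is precisely the joint Kulkarni--Nomizu estimate of Lemma 2.6; the rest of your outline (including the role of constant $\sigma_2$ and the interpretation of the $n=4$ degeneration) is sound.
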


\begin{remark} The inequality (1.2) of this theorem is optimal.
The critical case is given by the product manifold $\mathbb{S}^1\times \mathbb{S}^{n-1}$.
\end{remark}

\begin{theorem}
Let $(M^n, g)(n\geq4)$  be an $n$-dimensional   compact Riemannian manifold  with harmonic Weyl curvature, positive scalar curvature and  positive constant $\sigma_2(A_g)$. If
\begin{equation}
|W|^2+\left[\frac{2n}{n-2}-\frac{16(n-1)}{n(n-2)^2C(n)^2}\right]|\mathring{Ric}|^{2}
<\frac{32(n-1)}{n(n-2)^2C(n)^2}\sigma_2(A_g),
\end{equation}
where\begin{equation*}C(n)=
\begin{cases}\frac{\sqrt{6}}{2}, \quad \text{for} \ n=4, \\\frac{8}{\sqrt{10}}, \quad \text{for} \ n=5, \\\frac{2(n-2)}{\sqrt{n(n-1)}}+ \frac{n^2-n-4}{\sqrt{(n-2)(n-1)n(n+1)}}, \quad \text{for} \  n\geq6,
\end{cases}
\end{equation*}
then $(M^n, g)$ is isometric to a quotient of the round $\mathbb{S}^n$.
\end{theorem}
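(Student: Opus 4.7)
The plan is to combine a Weitzenb\"{o}ck identity for the traceless Ricci tensor $\mathring{Ric}$ with a refined Kato inequality (which will produce the dimension-dependent constant $C(n)$) to derive an integral pinching that forces $\mathring{Ric}\equiv 0$; once $(M,g)$ is Einstein, a Tachibana-type argument applied to the Weyl tensor $W$ under the residual pinching will give $W\equiv 0$, and Kuiper's theorem then identifies $M^n$ as a quotient of the round $\mathbb{S}^n$.

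First I would use the harmonic Weyl hypothesis (equivalently, the Codazzi property of the Schouten tensor $A_g$) to write the standard Bochner--Weitzenb\"{o}ck formula expressing $\tfrac{1}{2}\Delta|\mathring{Ric}|^2$ as $|\nabla\mathring{Ric}|^2$ plus a linear combination of $\operatorname{tr}(\mathring{Ric}^3)$, $\langle W,\mathring{Ric}\otimes\mathring{Ric}\rangle$, $R\,|\mathring{Ric}|^2$, and gradient-of-$R$ contributions. The constancy of $\sigma_2(A_g)$ together with (1.1) implies $\tfrac{(n-2)^2}{4n(n-1)}\nabla R^2 = \nabla|\mathring{Ric}|^2$, which lets me rewrite every $\nabla R$-term in terms of $\nabla|\mathring{Ric}|^2$ and eliminates the drift. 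The pointwise cubic pieces are then controlled by the Okumura-type inequality $|\operatorname{tr}(\mathring{Ric}^3)|\le \tfrac{n-2}{\sqrt{n(n-1)}}|\mathring{Ric}|^3$ and Huisken's estimate $|\langle W,\mathring{Ric}\otimes\mathring{Ric}\rangle|\le \sqrt{(n-2)/(2(n-1))}\,|W|\,|\mathring{Ric}|^2$.

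Next I would invoke the refined Kato inequality $|\nabla|\mathring{Ric}||^2 \le \tfrac{C(n)^2-1}{C(n)^2}|\nabla\mathring{Ric}|^2$ valid for the traceless Ricci of a harmonic-Weyl metric, where the dimension-dependent optimal $C(n)$ is exactly that of the statement (the three-case split reflects sharper low-dimensional improvements which must be derived by hand for $n=4,5$). Multiplying the Bochner identity by $|\mathring{Ric}|^{\alpha}$ for a carefully chosen exponent, integrating by parts on $M^n$, inserting the Kato improvement, and re-expressing $\tfrac{(n-2)^2}{4n(n-1)}R^2 = 2\sigma_2(A_g)+|\mathring{Ric}|^2$ via (1.1), I obtain
\[
\int_{M^n} \Bigl\{ \tfrac{32(n-1)}{n(n-2)^2 C(n)^2}\sigma_2(A_g) - |W|^2 - \Bigl[\tfrac{2n}{n-2} - \tfrac{16(n-1)}{n(n-2)^2 C(n)^2}\Bigr]|\mathring{Ric}|^2 \Bigr\} |\mathring{Ric}|^{\beta}\,dv_g \le 0.
\]
By the hypothesis (1.3) the integrand is strictly positive wherever $|\mathring{Ric}|>0$, so $\mathring{Ric}\equiv 0$ and $(M,g)$ is Einstein with constant positive scalar curvature.

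At this stage (1.3) reduces to the pure Weyl pinching $|W|^2 < \tfrac{32(n-1)}{n(n-2)^2 C(n)^2}\sigma_2(A_g)$, and a second Weitzenb\"{o}ck argument for $W$ under $\delta W=0$ on an Einstein background (as in Tachibana, Singer, and Hebey--Vaugon) yields $W\equiv 0$; Kuiper's classification of compact conformally flat Einstein manifolds with $R>0$ then identifies $(M,g)$ with a quotient of the round $\mathbb{S}^n$. The main obstacle will be the simultaneous sharp choice of the Kato constant $C(n)$ and the exponent $\alpha$ so that, after the Okumura and Huisken bounds are inserted, the coefficients of $\sigma_2(A_g)$ and $|\mathring{Ric}|^2$ in the final integrand line up exactly with those in (1.3); extracting the sharp low-dimensional values $\tfrac{\sqrt{6}}{2}$ and $\tfrac{8}{\sqrt{10}}$ from the Kato analysis, rather than the generic $n\ge 6$ formula, is where the most delicate case-by-case computation is required.
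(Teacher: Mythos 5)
Your overall skeleton (Bochner formula for $\mathring{Ric}$ to force Einstein, then a second Weitzenb\"ock argument to kill $W$, then the classification of conformally flat Einstein metrics) is the same as the paper's. But the central quantitative mechanism is misidentified. You attribute the constant $C(n)$ to a ``refined Kato inequality $|\nabla|\mathring{Ric}||^2\le \frac{C(n)^2-1}{C(n)^2}|\nabla\mathring{Ric}|^2$'' for the traceless Ricci tensor. No such inequality is used (or known) here, and $C(n)$ has nothing to do with Kato constants: in the paper it is the sharp constant in the pointwise algebraic bound $|2W_{ijlk}W_{jhkm}W_{himl}+\tfrac12 W_{ijkl}W_{hmij}W_{klhm}|\le \tfrac{C(n)}{2}|W|^3$ for the cubic Weyl terms in $\tfrac12\triangle|W|^2$ (Huisken's estimate for $n=4$, the Jack--Parker identity for $n=5$, Li--Zhao for $n\ge 6$). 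Consequently $C(n)$ enters only in the \emph{second} stage: once $M$ is Einstein, $(1.3)$ reads $|W|<\tfrac{2}{nC(n)}R$, so $\tfrac12\triangle|W|^2\ge|\nabla W|^2+|W|^2(\tfrac{2R}{n}-C(n)|W|)\ge 0$ and the maximum principle gives $W\equiv 0$. Your plan instead tries to extract the full $(1.3)$-shaped integrand from the $\mathring{Ric}$ Bochner formula alone, which cannot work and leaves the actual role of $C(n)$ unexplained.

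Two further points in the first stage are not right as stated. (i) You bound the cubic terms by applying Okumura ($|\operatorname{tr}\mathring{Ric}^3|\le\tfrac{n-2}{\sqrt{n(n-1)}}|\mathring{Ric}|^3$) and Huisken's $W$--$\mathring{Ric}$ estimate separately; this yields $\sqrt{\tfrac{n-2}{2(n-1)}}\,|W|\,|\mathring{Ric}|^2+\sqrt{\tfrac{n}{n-1}}\,|\mathring{Ric}|^3$, which is strictly weaker than the combined estimate actually needed (Lemma 2.6 of the paper, after Catino and Bour), namely $\sqrt{\tfrac{n-2}{2(n-1)}}\bigl(|W|^2+\tfrac{2n}{n-2}|\mathring{Ric}|^2\bigr)^{1/2}|\mathring{Ric}|^2$. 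The difference is exactly $a+b$ versus $\sqrt{a^2+b^2}$, and for $n=4$ the margin between the hypothesis and the required threshold closes completely, so the separate bounds do not ``line up exactly'' with $(1.3)$ without additional verification you have not supplied. (ii) Your treatment of the gradient term is too vague: after integrating by parts, one must show $\int|\nabla\mathring{Ric}|^2\ge\tfrac{(n-2)^2}{4n(n-1)}\int|\nabla R|^2$; the paper gets this from the Li--Simon inequality $|\nabla A|^2\ge|\nabla\operatorname{tr}A|^2$ for the Schouten tensor with constant $\sigma_2$. Your identity $\tfrac{(n-2)^2}{4n(n-1)}\nabla(R^2)=\nabla(|\mathring{Ric}|^2)$ is correct and can be made to yield the same conclusion via Cauchy--Schwarz together with $2\sigma_2\ge 0$, but ``eliminates the drift'' is not an argument. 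Finally, choosing an extra weight $|\mathring{Ric}|^{\alpha}$ is unnecessary; the paper integrates the Bochner identity directly.
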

\begin{corollary}\label{66666}
Let $(M^4, g)$  be a $4$-dimensional   compact Riemannian manifold  with harmonic Weyl curvature, positive scalar curvature and  positive constant $\sigma_2(A_g)$. If
\begin{equation}
|W|^2<8\sigma_2(A_g),
\end{equation}
then $(M^4, g)$ is isometric to a quotient of the round $\mathbb{S}^4$ or a  $\mathbb{CP}^2$ with the Fubini-Study metric.
\end{corollary}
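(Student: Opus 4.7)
The plan is to deduce Corollary \ref{66666} from Theorem 1.1 combined with a classical rigidity theorem for 4-dimensional Einstein manifolds. Specializing (1.2) to $n=4$, one sees that the coefficient $\tfrac{2n^2(n-4)}{(n-2)^3}$ of $|\mathring{Ric}|^2$ vanishes while $\tfrac{16n}{(n-2)^3}=8$, so the hypothesis $|W|^2<8\sigma_2(A_g)$ of the corollary coincides exactly with the pinching (1.2) for $n=4$. Theorem 1.1 therefore yields that $(M^4,g)$ is Einstein, so the scalar curvature $R>0$ is constant, $\mathring{Ric}=0$, and $\sigma_2(A_g)=R^2/24$. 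The pointwise hypothesis becomes $|W|^2<R^2/3$ on all of $M$.

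Next I would use the orientation decomposition $W=W^+\oplus W^-$, with each $W^\pm$ viewed as a trace-free symmetric endomorphism of $\Lambda^\pm$, together with Derdzinski's Weitzenb\"ock identity on a 4-dimensional Einstein manifold,
\[
\tfrac{1}{2}\Delta|W^\pm|^2 = |\nabla W^\pm|^2 + \tfrac{R}{2}|W^\pm|^2 - 18\det W^\pm,
\]
and the sharp algebraic inequality $|\det W^\pm|\le |W^\pm|^3/(3\sqrt{6})$ valid for $3\times 3$ traceless symmetric matrices. Integrating the Weitzenb\"ock formula against $M$ gives $R\int|W^\pm|^2\,dv_g\le 2\sqrt{6}\int|W^\pm|^3\,dv_g$, and combining this with the pointwise bound $|W^\pm|\le |W|<R/\sqrt{3}$ furnished by the pinching is intended to force at least one of $W^+, W^-$ to vanish identically. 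Once half-conformal flatness is established, Hitchin's classification of compact oriented half-conformally flat Einstein 4-manifolds with positive scalar curvature gives that $(M^4,g)$ is isometric either to a finite isometric quotient of the round $\mathbb{S}^4$ (in the locally conformally flat case $W\equiv 0$) or to $\mathbb{CP}^2$ with the Fubini--Study metric (when only one of $W^\pm$ vanishes), which is the desired conclusion.

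I expect the main obstacle to be the second step, namely producing half-conformal flatness. The bound $|W|^2<R^2/3$ is sharp: the product $\mathbb{S}^2\times\mathbb{S}^2$ attains equality, so the argument must essentially use the strict inequality. A naive integration of the Bochner identities yields constants that only just fail to close, and to complete the argument one will likely have to invoke either a refined Kato inequality specific to dimension 4, or the sharp maximum-principle estimate $\|W^\pm\|_\infty^2\ge R^2/24$ (which holds whenever $W^\pm\not\equiv 0$), together with careful bookkeeping on $|W^+|^2$ and $|W^-|^2$ separately, to exclude the borderline scenario in which both half-Weyl tensors are nontrivial.
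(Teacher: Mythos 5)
Your first step is exactly the paper's argument: for $n=4$ the coefficient of $|\mathring{Ric}|^2$ in (1.2) vanishes and $\tfrac{16n}{(n-2)^3}=8$, so (1.4) is the $n=4$ case of (1.2); Theorem 1.1 gives that $(M^4,g)$ is Einstein, and then $\sigma_2(A_g)=R^2/24$ turns the hypothesis into the pointwise bound $|W|^2<R^2/3$. Your observation that $\mathbb{S}^2\times\mathbb{S}^2$ realizes equality here is also correct and explains why the strict inequality is essential. Where you diverge is the final step: the paper does not reprove the classification of Einstein $4$-manifolds with $|W|^2<R^2/3$ at all --- it simply invokes Theorem 1.8 of \cite{F} (see also \cite{F2}), which is precisely that classification.

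The gap is therefore in the half-conformal-flatness step, and it is a real one that you yourself flag. With Derdzinski's identity and $18\,|\det W^\pm|\le\sqrt{6}\,|W^\pm|^3$, the maximum principle applied to $|W^\pm|^2$ only yields that $W^\pm\not\equiv0$ forces $\max|W^\pm|^2\ge R^2/6$ (in the tensor norm used in the paper; this bound is attained identically by $\mathbb{CP}^2$). Since the maxima of $|W^+|^2$ and $|W^-|^2$ need not occur at the same point, the scenario $W^+\not\equiv0$, $W^-\not\equiv0$ with $|W|^2<R^2/3$ everywhere is not excluded by this estimate, and the naive integration of the Bochner formulas fails for the same reason. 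So as written your argument does not produce $W^+\equiv0$ or $W^-\equiv0$, and the subsequent appeal to Hitchin's theorem has nothing to apply to. To close the argument you would need the refined ingredients used in \cite{F} (the refined Kato inequality for $\delta W^\pm=0$ and a more careful simultaneous treatment of $|W^+|$ and $|W^-|$), or you should simply cite Theorem 1.8 of \cite{F} as the paper does.
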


Since for a  $4$-dimensional  locally conformally flat Riemannian manifold $W\equiv 0$, we can directly get the following corollary from Corollary \ref{66666}.
\begin{corollary}\label{666666}
Let $(M^4, g)$  be a $4$-dimensional   compact locally conformally flat Riemannian manifold  with  positive scalar curvature and  positive constant $\sigma_2(A_g)$.
Then $(M^4, g)$ is isometric to a quotient of the round $\mathbb{S}^4$.
\end{corollary}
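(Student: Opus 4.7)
The strategy is straightforward: since the hypothesis of local conformal flatness forces $W \equiv 0$ in dimension four, I would simply verify that Corollary~\ref{66666} applies and then eliminate the $\mathbb{CP}^2$ alternative that appears in its conclusion.

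First I would observe that for a locally conformally flat $(M^4,g)$ the Weyl tensor vanishes identically, so trivially $\delta W = 0$ and the manifold has harmonic Weyl curvature. Together with the assumed positivity of the scalar curvature and of the constant $\sigma_2(A_g)$, this places $(M^4,g)$ in the setting of Corollary~\ref{66666}. The pinching inequality (1.4) to be checked is $|W|^2 < 8\sigma_2(A_g)$, which becomes $0 < 8\sigma_2(A_g)$ and holds by the positivity assumption on $\sigma_2(A_g)$. Hence Corollary~\ref{66666} yields that $(M^4,g)$ is isometric either to a quotient of the round $\mathbb{S}^4$ or to $\mathbb{CP}^2$ endowed with the Fubini--Study metric.

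To finish, I would rule out the second possibility by recalling that the Fubini--Study metric on $\mathbb{CP}^2$ has nonzero Weyl tensor: it is K\"ahler--Einstein but not of constant sectional curvature, so it fails to be locally conformally flat. This contradicts the assumption on $(M^4,g)$, leaving only the conclusion that $(M^4,g)$ is isometric to a quotient of the round $\mathbb{S}^4$.

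There is essentially no obstacle here, since the corollary is deduced as a direct specialization. The only point that requires a moment of care is the elimination of $\mathbb{CP}^2$, which rests on the classical fact that the Fubini--Study metric is not conformally flat; once this is noted the argument is complete.
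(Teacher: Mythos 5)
Your proposal is correct and follows the same route as the paper, which deduces the corollary directly from Corollary 1.4 by noting that $W\equiv 0$ makes the pinching condition (1.4) read $0<8\sigma_2(A_g)$. You are in fact slightly more careful than the paper, which leaves implicit the exclusion of the $\mathbb{CP}^2$ alternative; your observation that the Fubini--Study metric is not locally conformally flat (its self-dual Weyl part is nonzero) is exactly the right way to close that gap.
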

\begin{remark} Hu-Li-Simon \cite{HLS} proved this similar result in which the Schouten tensor $A_g$ is semi-positive definite.
\end{remark}
\begin{corollary}\label{6666666}
Let $(M^4, g)$  be a $4$-dimensional   compact locally conformally flat Riemannian manifold  with  positive Yamabe constant and  $\int_{M^4}\sigma_2(A_g)>0$.
Then $(M^4, g)$ is conformal to a quotient of the round $\mathbb{S}^4$.
\end{corollary}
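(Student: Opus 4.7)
The plan is to reduce Corollary~\ref{6666666} to Corollary~\ref{666666} by producing a conformal representative $\tilde g \in [g]$ that satisfies the hypotheses of the latter. Three conformal-invariance facts drive the reduction. First, the vanishing $W \equiv 0$ is preserved under conformal change, so local conformal flatness transfers to any metric in $[g]$. Second, in dimension four $\int_{M^4}\sigma_2(A_g)\,dv_g$ is a conformal invariant: this follows from the Gauss--Bonnet--Chern formula
\[
8\pi^2\chi(M^4) \;=\; \int_{M^4}\left(\sigma_2(A_g) + \frac{1}{4}|W|^2\right)dv_g,
\]
combined with the pointwise conformal invariance of $|W|^2\,dv_g$ in dimension four. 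Third, positivity of the Yamabe constant is a conformal invariant by definition.

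The key analytic step is to invoke the solvability of the $\sigma_2$-Yamabe problem in the positive cone for locally conformally flat $4$-manifolds (Chang--Gursky--Yang and subsequent work): given $Y(M^4,[g]) > 0$ and $\int_{M^4}\sigma_2(A_g)\,dv_g > 0$, there exists a metric $\tilde g = e^{2u} g \in [g]$ for which $\sigma_2(A_{\tilde g})$ equals a positive constant and for which the scalar curvature $\tilde R$ is pointwise positive. Note that the hypothesis $\int \sigma_2 > 0$ is preserved when passing to $\tilde g$ by the conformal invariance above, and the positivity of $Y$ persists by definition.

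With $\tilde g$ in hand, one checks it satisfies all hypotheses of Corollary~\ref{666666}: local conformal flatness is inherited from $g$, while positive scalar curvature and positive constant $\sigma_2$ hold by construction. Applying Corollary~\ref{666666} yields that $(M^4,\tilde g)$ is isometric to a quotient of the round $\mathbb{S}^4$. Since $g$ and $\tilde g$ lie in the same conformal class, $(M^4,g)$ is conformal to a quotient of the round $\mathbb{S}^4$, as required.

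The main obstacle is the $\sigma_2$-Yamabe existence step: this is a substantial fully nonlinear elliptic PDE result (a Monge--Amp\`ere-type equation), and it is essential that the two hypotheses $Y(M^4,[g]) > 0$ and $\int_{M^4}\sigma_2(A_g)\,dv_g > 0$ enter together in order to land the solution in the correct cone (so that both $\tilde R$ and $\sigma_2(A_{\tilde g})$ have the right sign). Once this analytic input is granted, the conformal-invariance observations and the final application of Corollary~\ref{666666} are essentially formal.
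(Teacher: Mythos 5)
Your proposal is correct and follows essentially the same route as the paper: invoke the Chang--Gursky--Yang existence result (Theorem 1.1 in \cite{CGY}) to produce a conformal metric with positive scalar curvature and positive constant $\sigma_2$, then apply Corollary~\ref{666666}. The additional conformal-invariance remarks you include are accurate but not needed beyond what the paper's two-line argument already uses.
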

\begin{remark} This result had been proved by Gursky \cite{G}, Xiao and the first author \cite{FX}.
\end{remark}
\begin{corollary}\label{66666666}
Let $(M^n,g)(n\geq5)$ be an n-dimensional compact locally conformally flat Riemannian  manifold with positive scalar curvature and positive constant $\sigma_2(A_g)$. If
\begin{equation}|\mathring{Ric}|^2<\frac{1}{n(n-1)}R^2,\end{equation}
then $(M^n, g)$  is isometric to a quotient of the round $\mathbb{S}^n$.
\end{corollary}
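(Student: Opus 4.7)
The plan is to reduce Corollary \ref{66666666} to Theorem 1.1 by a purely algebraic rearrangement, with no new analysis. Since $(M^n,g)$ is locally conformally flat, its Weyl tensor $W$ vanishes identically, so in particular $\delta W=0$ and $(M^n,g)$ has harmonic Weyl curvature; combined with the standing assumptions of positive scalar curvature and positive constant $\sigma_2(A_g)$, all three hypotheses of Theorem 1.1 are already in force.

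The main step is to verify that the pinching (1.5) is, under the extra assumption $W\equiv 0$ and $n\geq 5$, exactly the pinching (1.2) of Theorem 1.1. Setting $|W|^2=0$ in (1.2) and clearing the common factor $\tfrac{2n}{(n-2)^{3}}$, the inequality becomes $n(n-4)|\mathring{Ric}|^{2}<8\,\sigma_2(A_g)$, which is a meaningful statement because the assumption $n\geq 5$ makes the left-hand coefficient nonnegative. Substituting the identity $2\sigma_2(A_g)=\tfrac{(n-2)^2}{4n(n-1)}R^{2}-|\mathring{Ric}|^{2}$ from (1.1) and using the elementary identity $n(n-4)+4=(n-2)^2$, a short manipulation collapses the inequality to
\begin{equation*}
(n-2)^{2}\,|\mathring{Ric}|^{2}<\frac{(n-2)^{2}}{n(n-1)}\,R^{2},
\end{equation*}
i.e.\ $|\mathring{Ric}|^{2}<\tfrac{1}{n(n-1)}R^{2}$, which is exactly hypothesis (1.5). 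Hence (1.5) is equivalent to (1.2) in the present setting.

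Theorem 1.1 then applies and forces $(M^n,g)$ to be Einstein. The last step uses the classical observation that a locally conformally flat Einstein manifold has constant sectional curvature; since the scalar curvature is positive, $(M^n,g)$ is a positively curved space form, and therefore isometric to a quotient of the round $\mathbb{S}^{n}$.

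Since the entire argument is an algebraic simplification together with one appeal to Theorem 1.1 and the standard rigidity of LCF Einstein metrics, there is no serious obstacle to overcome. The only point that requires care is the sign of the coefficient $n(n-4)$ in (1.2) when $W=0$, which is why the hypothesis $n\geq 5$ appears (the borderline case $n=4$ is instead covered by Corollary \ref{666666}).
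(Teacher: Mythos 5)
Your proposal is correct and follows the same route as the paper: observe that $W\equiv 0$ gives harmonic Weyl curvature, check that (1.2) with $W=0$ reduces algebraically to (1.5) via the identity (1.1), and invoke Theorem 1.1. You are in fact slightly more careful than the paper, which jumps from Theorem 1.1 (whose stated conclusion is only that $M^n$ is Einstein) directly to the space-form conclusion, whereas you make explicit the standard final step that a locally conformally flat Einstein manifold with positive scalar curvature has constant positive sectional curvature.
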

\begin{remark}
In \cite{FX}, Xiao and the first author proved that for a compact locally conformally flat Riemannian manifold $(M^n,g)(n\geq4)$ with  positive constant scalar curvature, if $\left(\int_{M^n}|\mathring{Ric}|^{\frac n2}\right)^{\frac 2n}<\frac{1}{n(n-1)}Y(M^n, [g])$,  then $(M^n,g)$ is isometric to a quotient of the round $\mathbb{S}^n$.
In \cite{PRS}, Pigola, Rigoli and Setti showed that for a complete locally conformally flat Riemannian  manifold $(M^n,g)(n\geq4)$ with  positive constant scalar curvature, if $|\mathring{Ric}|^2<\frac{1}{n(n-1)}R^2$, $(M^n,g)$ is isometric to a quotient of the round $\mathbb{S}^n$.
\end{remark}

\begin{theorem}\label{5555555}
Let $(M^n, g)(n\geq5)$  be an $n$-dimensional   compact Riemannian manifold  with harmonic Weyl curvature and  positive Yamabe constant. If
\begin{equation}
\begin{split}C(n)\left(\int_{M^n}|W|^{\frac n2}\right)^{\frac 2n}+2\sqrt{\frac{n-1}{n}}\left(\int_{M^n}|\mathring{Ric}|^{\frac n2}\right)^{\frac 2n}
<D(n)Y(M^n,[g]),
\end{split}
\end{equation}
where \begin{equation*}D(n)=
\begin{cases}\frac{3}{8}, \quad \text{for} \ n=5, \\ \frac{2}{n}, \quad  \text{for} \  n\geq6,
\end{cases}\end{equation*} then $(M^n, g)$ is locally conformally flat. Moreover, when  $n=5$,
the same result holds only assuming the weak inequality.
\end{theorem}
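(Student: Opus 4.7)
The plan is to run a Bochner--Sobolev argument on the Weyl tensor, of the same flavor as the integral pinching proofs for Einstein or constant scalar curvature metrics. I would begin by recording the Weitzenbock-type identity for $|W|^2$ that is available under the hypothesis $\delta W=0$; schematically,
\begin{equation*}
\tfrac{1}{2}\Delta|W|^{2} \;=\; |\nabla W|^{2} \;+\; \tfrac{R}{n-1}|W|^{2} \;-\; Q_{3}(W) \;-\; Q_{2}(W,\mathring{Ric}),
\end{equation*}
where $Q_{3}(W)$ is a cubic contraction of $W$ with itself and $Q_{2}(W,\mathring{Ric})$ is the quadratic mixed term $\mathring{Ric}\cdot W\cdot W$. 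The algebraic cornerstone of the proof is the pair of sharp pointwise bounds $|Q_{3}(W)|\leq C(n)|W|^{3}$ (the very estimate that produces the constant $C(n)$ in Theorem~1.2, obtained by diagonalising the Weyl endomorphism on $\Lambda^{2}$) and $|Q_{2}(W,\mathring{Ric})|\leq 2\sqrt{(n-1)/n}\,|\mathring{Ric}|\,|W|^{2}$, the latter from diagonalising $\mathring{Ric}$ and exploiting $\operatorname{tr}\mathring{Ric}=0$.

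Next I would set $u:=|W|^{(n-2)/4}$, multiply the Bochner identity by $|W|^{(n-4)/2}$, integrate, and feed in the refined Kato inequality for harmonic Weyl tensors $|\nabla W|^{2}\geq k_{n}|\nabla|W||^{2}$. After rescaling one arrives at
\begin{equation*}
\int_{M}\Bigl(a_{n}|\nabla u|^{2} + \tfrac{R}{n-1}u^{2}\Bigr) \;\leq\; \int_{M}\Bigl(C(n)|W| + 2\sqrt{\tfrac{n-1}{n}}|\mathring{Ric}|\Bigr)u^{2},
\end{equation*}
where $a_{n}$ is the constant that, together with $\tfrac{4(n-1)}{n-2}$, will ultimately generate $D(n)$.

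Now the Yamabe-Sobolev inequality applied to $u$ reads
\begin{equation*}
Y(M^{n},[g])\Bigl(\int_{M}u^{2n/(n-2)}\Bigr)^{(n-2)/n} \;\leq\; \int_{M}\Bigl(\tfrac{4(n-1)}{n-2}|\nabla u|^{2} + R u^{2}\Bigr),
\end{equation*}
and combining it with Hölder's inequality on the right-hand side, $\int_{M}|W|u^{2}\leq \|W\|_{n/2}\|u\|^{2}_{2n/(n-2)}$ and similarly for $|\mathring{Ric}|$, and noting that $u^{2n/(n-2)}=|W|^{n/2}$, yields after a careful bookkeeping
\begin{equation*}
\Bigl[D(n)Y(M^{n},[g]) - C(n)\|W\|_{n/2} - 2\sqrt{\tfrac{n-1}{n}}\|\mathring{Ric}\|_{n/2}\Bigr]\|W\|_{n/2}^{(n-2)/2} \;\leq\; 0.
\end{equation*}
The pinching hypothesis \textup{(1.6)} forces the bracket to be positive, whence $\|W\|_{n/2}=0$ and $(M^{n},g)$ is locally conformally flat.

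The principal technical obstacle I foresee is the algebraic bookkeeping required to match the coefficient $a_{n}$ coming from the Bochner step with the Sobolev coefficient $\tfrac{4(n-1)}{n-2}$, so that the precise dimensional constant $D(n)$ comes out with its two-branch form. The split between $n=5$ and $n\geq 6$ reflects the fact that in dimension five the refined Kato inequality is sharp enough that, when the pinching is satisfied only with equality, backtracking through each inequality forces $\nabla W\equiv 0$ and $|W|\equiv\text{const}$; since $\int R|W|^{(n-2)/2}>0$, the Yamabe equality case then still gives $|W|\equiv 0$. This rigidity in the equality case is what permits the weak inequality for $n=5$ but fails for $n\geq 6$.
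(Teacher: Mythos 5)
Your overall strategy --- the Weitzenb\"ock formula for $|W|^2$ under $\delta W=0$ (the paper's Lemma 2.2/2.4), the refined Kato inequality, integration against a power of $|W|$, then the Yamabe--Sobolev inequality combined with H\"older --- is exactly the paper's route. The gap sits in the step you yourself flag as the ``principal technical obstacle'': the choice of the power. You fix $u=|W|^{(n-2)/4}$ so that $u^{2n/(n-2)}=|W|^{n/2}$, but H\"older does not require this (one only needs $\int |W|\,|W|^{2\gamma}\leq \|W\|_{n/2}\,\| |W|^{\gamma}\|^{2}_{2n/(n-2)}$ for an arbitrary exponent $\gamma$), and with your choice the constants do not come out. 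Concretely, after absorbing the Dirichlet energy into the Yamabe functional, the scalar curvature survives with coefficient $\frac{1}{4n(n-1)}\bigl(8(n-1)\gamma+\frac{n(n-2)(n-3)}{(n-1)\gamma}-2n(n-2)\bigr)$ multiplying $\int_M R|W|^{2\gamma}$. Since positivity of the Yamabe constant does not give pointwise $R\geq 0$, this residual term has no definite sign and cannot simply be dropped, yet your final display discards it. The paper instead leaves $\gamma$ free: for $n\geq 6$ it chooses $\gamma$ to be a root of that coefficient (explicitly $\frac1\gamma=\frac{n-1}{n-3}\bigl(1+\sqrt{1-\frac{8(n-3)}{n(n-2)}}\bigr)$), which annihilates the residual term and makes the Yamabe coefficient equal exactly $\frac2n=D(n)$. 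With your $\gamma=\frac{n-2}{4}$ the Yamabe coefficient is only $\frac{1}{n-1}\bigl(2-\frac{4(n-3)}{(n-1)(n-2)}\bigr)$ (e.g.\ $\frac{7}{25}<\frac13$ at $n=6$), so hypothesis (1.6) with $D(n)=\frac2n$ is not strong enough to close your argument even before worrying about the residual term.

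Your account of the borderline case $n=5$ is also not the operative mechanism. For $n=5$ one has $1-\frac{8(n-3)}{n(n-2)}=1-\frac{16}{15}<0$, so the quadratic determining the optimal $\gamma$ has no real root and the residual scalar curvature term cannot be eliminated; the paper takes $\gamma=\frac12$, which produces the coefficient $\frac38$ of $Y(M^5,[g])$ together with a leftover $+\frac{1}{80}\int_{M^5}R|W|$ carrying a \emph{strictly positive} coefficient, and it is this extra contribution --- not an equality analysis of the Kato or Sobolev inequalities forcing $\nabla W\equiv 0$ --- that lets the weak inequality in (1.6) suffice when $n=5$. Two smaller points: the zeroth order term in the Weitzenb\"ock identity is $\frac{2R}{n}|W|^2$, not $\frac{R}{n-1}|W|^2$, and the paper's Kato constant is $\frac{n+1}{n-1}$; both enter the bookkeeping that produces $D(n)$, so they cannot be left schematic.
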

\begin{remark} This result is a straightforward reworking
of the first author's arguments on Theorem 1.10 in \cite{F2}. Although this result has been proved in \cite{T}, the pinching condition (1.6) is better for $n\geq 6$. When $n=4$, this result has been showed by the first author \cite{F}.
\end{remark}

{Acknowledgement:} The authors are very grateful to Professor Haizhong Li for his guidance and constant support.


\section{Proofs of Lemmas}
In this section, in order to prove some main theorems in this article, we first give the following lemmas.

In what follows, we adopt, without further comment, the moving frame notation with respect to a chosen local orthonormal frame. And we adopt the Einstein convention.

Let $M^n(n\geq3)$ be an  $n$-dimensional Riemannian manifold with harmonic Weyl curvature.
Decomposing the Riemannian curvature tensor $Rm$ into irreducible components yields
\begin{eqnarray*}
R_{ijkl}&=&W_{ijkl}+\frac{1}{n-2}(R_{ik}\delta_{jl}-R_{il}\delta_{jk}+R_{jl}\delta_{ik}-R_{jk}\delta_{il})\nonumber\\
&&-\frac{R}{(n-1)(n-2)}(\delta_{ik}\delta_{jl}-\delta_{il}\delta_{jk})\nonumber\\
&=&W_{ijkl}+\frac{1}{n-2}(\mathring{R}_{ik}\delta_{jl}-\mathring{R}_{il}\delta_{jk}+\mathring{R}_{jl}\delta_{ik}-\mathring{R}_{jk}\delta_{il})\nonumber\\
&&+\frac{R}{n(n-1)}(\delta_{ik}\delta_{jl}-\delta_{il}\delta_{jk})\nonumber\\
&=&W_{ijkl}+\frac{1}{n-2}(A_{ik}\delta_{jl}-A_{il}\delta_{jk}+A_{jl}\delta_{ik}-A_{jk}\delta_{il}),
\end{eqnarray*}
where $R$  is the scalar curvature, $R_{ijkl}$, $W_{ijkl}$, $R_{ij}$, $\mathring{R}_{ij}$  and $A_{ij}$  denote the components of $Rm$, the Weyl curvature tensor $W$,  the Ricci tensor $Ric$, the trace-free Ricci tensor $\mathring{Ric}=Ric-\frac{R}{n}g$  and  the Schouten tensor $A_g=Ric-\frac{R}{2(n-1)}g$, respectively.

By the second Bianchi identity, we have
\begin{equation}\mathring{R}_{ij,k}-\mathring{R}_{ik,j}={R}_{likj,l}+\frac{R_{,j}}{n}\delta_{ik}-\frac{R_{,k}}{n}\delta_{ij},\end{equation}
and
\begin{equation}\mathring{R}_{ij,j}=\frac{n-2}{2n}R_{,i}.\end{equation}
Moreover, by the assumption that Weyl curvature is harmonic,
we compute
\begin{equation*}
\begin{split}
0&=W_{ijkl,l}={R}_{ijkl,l}-\frac{1}{n-2}(\mathring{R}_{ik,l}\delta_{jl}-\mathring{R}_{il,l}\delta_{jk}+\mathring{R}_{jl,l}\delta_{ik}-\mathring{R}_{jk,l}\delta_{il})
-\frac{R_{,l}}{n(n-1)}(\delta_{ik}\delta_{jl}-\delta_{il}\delta_{jk})\\
&=\frac{n-3}{n-2}(\mathring{R}_{ki,j}-\mathring{R}_{kj,i})-\frac{n-3}{2n(n-1)}(R_{,i}\delta_{kj}-R_{,j}\delta_{ki}).
\end{split}
\end{equation*}
Thus we  get
\begin{equation}
\mathring{R}_{ki,j}-\mathring{R}_{kj,i}=\frac{n-2}{2n(n-1)}(R_{,i}\delta_{kj}-R_{,j}\delta_{ki}).
\end{equation}
Combining (2.3) with the second Bianchi identity, we obtain
\begin{equation}
{W}_{ijkl,h}+{W}_{ijlh,k}+{W}_{ijhk,l}=0.
\end{equation}

\begin{lemma}
Let $M^n(n\geq 4)$  be an $n$-dimensional complete  Riemannian manifold
with harmonic Weyl curvature.   Then
\begin{equation}
\frac12\triangle|\mathring{Ric}|^2=|\nabla \mathring{Ric}|^2+\frac{n-2}{2(n-1)}\mathring{R}_{ij}R_{,ij}+W_{kijl}\mathring{R}_{ij}\mathring{R}_{kl}+\frac{n}{n-2}\mathring{R}_{ij}\mathring{R}_{jl}\mathring{R}_{li}
+\frac{R}{n-1}|\mathring{Ric}|^2.
\end{equation}
\end{lemma}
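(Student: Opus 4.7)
The plan is to start from the elementary Bochner-type identity $\tfrac12\Delta|\mathring{Ric}|^2 = |\nabla\mathring{Ric}|^2 + \mathring{R}_{ij}\Delta\mathring{R}_{ij}$ and reduce everything to computing $\mathring{R}_{ij}\Delta\mathring{R}_{ij}$ using the Codazzi identity (2.3), the contracted second Bianchi identity (2.2), and the curvature decomposition recalled at the beginning of Section 2.

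First I would take $\nabla_j$ of (2.3) and sum over $j$ to obtain
\begin{equation*}
\Delta \mathring{R}_{ki} = \mathring{R}_{kj,ij} + \tfrac{n-2}{2n(n-1)}\bigl(R_{,ik} - \Delta R\,\delta_{ki}\bigr).
\end{equation*}
Upon contracting against $\mathring{R}_{ki}$, the $\Delta R\,\delta_{ki}$ piece dies by tracelessness of $\mathring{Ric}$. Next I would commute $\nabla_i,\nabla_j$ in the first summand by the Ricci identity, producing $\mathring{R}_{kj,ji}$ plus two curvature-tensor contractions of the schematic form $R_{ijkm}\mathring{R}_{mj}$ and $R_{ijjm}\mathring{R}_{km}$. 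The term $\mathring{R}_{kj,ji}$ equals $\tfrac{n-2}{2n}R_{,ki}$ by (2.2), so the total coefficient of $\mathring{R}_{ki}R_{,ki}$ comes out to $\tfrac{n-2}{2n} + \tfrac{n-2}{2n(n-1)} = \tfrac{n-2}{2(n-1)}$, matching the Hessian term in the lemma.

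Finally I would substitute $R_{ijkl} = W_{ijkl} + \tfrac{1}{n-2}(A_{ik}\delta_{jl} - A_{il}\delta_{jk} + A_{jl}\delta_{ik} - A_{jk}\delta_{il})$ and the identity $A = \mathring{Ric} + \tfrac{(n-2)R}{2n(n-1)}g$ into the two Ricci-commutator curvature contractions. The Weyl piece collapses, after invoking the symmetries and tracelessness of $W$, to exactly $W_{kijl}\mathring{R}_{ij}\mathring{R}_{kl}$ (the all-$W$-trace pieces vanish and the cross pair-symmetries of $W$ merge the two contributions). The Schouten piece expands into on the order of sixteen monomials in $\mathring{R}$, $g$, and $R$; using repeatedly the symmetry and trace-freeness of $\mathring{Ric}$, together with $\operatorname{tr}\mathring{Ric}=0$, these collapse to precisely $\tfrac{n}{n-2}\mathring{R}_{ij}\mathring{R}_{jl}\mathring{R}_{li}$ and $\tfrac{R}{n-1}|\mathring{Ric}|^2$, completing the identity.

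The main obstacle is this final bookkeeping. One must fix sign conventions in the Ricci commutation formula, identify which $W$-traces vanish and which do not, and carefully verify that the numerous Schouten-decomposition terms collate into the clean coefficients $\tfrac{n}{n-2}$ and $\tfrac{1}{n-1}$. Once the four invariants $\mathring{R}_{ij}R_{,ij}$, $W_{kijl}\mathring{R}_{ij}\mathring{R}_{kl}$, $\mathring{R}_{ij}\mathring{R}_{jl}\mathring{R}_{li}$, and $R|\mathring{Ric}|^2$ emerge with the stated coefficients, the proof is complete.
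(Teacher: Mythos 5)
Your proposal is correct and follows essentially the same route as the paper's proof: the Bochner identity for $|\mathring{Ric}|^2$, the Codazzi relation (2.3) and contracted Bianchi identity (2.2) to produce the Hessian term with coefficient $\tfrac{n-2}{2(n-1)}$, the Ricci commutation formula, and the decomposition of $Rm$ into $W$ plus Schouten/trace-free Ricci terms to obtain the remaining three invariants with the stated coefficients. The only cosmetic difference is that you differentiate (2.3) before commuting derivatives, whereas the paper applies (2.3) twice around the commutation; the resulting bookkeeping is identical.
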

\begin{proof}
We compute
\begin{equation*}
\frac12\triangle|\mathring{Ric}|^2=|\nabla \mathring{Ric}|^2+\langle \mathring{Ric}, \triangle \mathring{Ric}\rangle=|\nabla \mathring{Ric}|^2+\mathring{R}_{ij}\mathring{R}_{ij,kk}.
\end{equation*}
By Ricci identities and (2.3), we obtain
\begin{equation*}
\begin{split} \mathring{R}_{ij,kk}&=\mathring{R}_{ik,jk}+\frac{n-2}{2n(n-1)}(R_{,jk}\delta_{ik}-R_{,kk}\delta_{ij})\\
&=\mathring{R}_{ki,kj}+\mathring{R}_{li}R_{lkjk}+\mathring{R}_{kl}R_{lijk}+\frac{n-2}{2n(n-1)}(R_{,ji}-R_{,kk}\delta_{ij})\\
&=\mathring{R}_{kk,ij}+\frac{n-2}{2n(n-1)}(R_{,ij}\delta_{kk}-R_{,kj}\delta_{ki})+\mathring{R}_{li}R_{lj}+\mathring{R}_{kl}R_{lijk}+\frac{n-2}{2n(n-1)}(R_{,ji}-R_{,kk}\delta_{ij})\\
&=\frac{n-2}{2n(n-1)}(nR_{,ij}-R_{,kk}\delta_{ij})+\mathring{R}_{li}\mathring{R}_{lj}+\mathring{R}_{kl}R_{lijk}+\frac{R}{n}\mathring{R}_{ji},
\end{split}
\end{equation*}
which gives
\begin{equation*}
\begin{split}
\frac12\triangle|\mathring{Ric}|^2&=|\nabla \mathring{Ric}|^2+\frac{n-2}{2(n-1)}\mathring{R}_{ij}R_{,ij}-\frac{n-2}{2n(n-1)}R_{,kk}\delta_{ij}\mathring{R}_{ij}
+\mathring{R}_{ij}\mathring{R}_{li}\mathring{R}_{lj}+\mathring{R}_{ij}\mathring{R}_{kl}R_{lijk}+\frac{R}{n}|\mathring{Ric}|^2\\
&=|\nabla \mathring{Ric}|^2+\frac{n-2}{2(n-1)}\mathring{R}_{ij}R_{,ij}+W_{kijl}\mathring{R}_{ij}\mathring{R}_{kl}+\frac{n}{n-2}\mathring{R}_{ij}\mathring{R}_{jl}\mathring{R}_{li}
+\frac{R}{n-1}|\mathring{Ric}|^2.
\end{split}
\end{equation*}
This completes the proof of this Lemma.
\end{proof}

\begin{lemma}
Let $M^n(n\geq 4)$  be an $n$-dimensional complete  Riemannian manifold
with harmonic Weyl curvature.      Then
\begin{equation}
\frac12\triangle|W|^2=|\nabla W|^2+2W_{ijkl}(2W_{hjkm}W_{hilm}
-\frac 12W_{ijhm}W_{klhm})+\frac{2R}{n}|W|^2+2W_{ijkl}W_{ijkh}\mathring{R}_{hl}.
\end{equation}
\end{lemma}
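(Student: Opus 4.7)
The plan is to derive the identity as a Bochner-type formula. Starting from
$$\tfrac12\triangle|W|^2 = |\nabla W|^2 + W_{ijkl}\,W_{ijkl,mm},$$
the task reduces to computing $\triangle W_{ijkl}=W_{ijkl,mm}$. First, I would differentiate the second Bianchi identity (2.4) in the index $h$ and contract with $h=m$ to obtain
$$W_{ijkl,mm} = -W_{ijlm,km} - W_{ijmk,lm}.$$
Next, in each of these two terms I would commute the covariant derivatives using the Ricci identity applied to the $(0,4)$-tensor $W$. Because $W$ is antisymmetric in its last two indices, the harmonic-Weyl hypothesis gives $W_{ijlm,m}=0$ and $W_{ijmk,m}=-W_{ijkm,m}=0$, so after the swap the terms $W_{ijlm,mk}$ and $W_{ijmk,ml}$ vanish. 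Thus $\triangle W_{ijkl}$ reduces to purely algebraic curvature commutators of the schematic form $R_{km}{}^a{}_\cdot\,W_{\cdots a\cdots}$.

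Then, into every factor of $R_{abcd}$ appearing in these commutators I would substitute the standard decomposition
$$R_{abcd}=W_{abcd}+\tfrac{1}{n-2}(\mathring R_{ac}\delta_{bd}-\mathring R_{ad}\delta_{bc}+\mathring R_{bd}\delta_{ac}-\mathring R_{bc}\delta_{ad})+\tfrac{R}{n(n-1)}(\delta_{ac}\delta_{bd}-\delta_{ad}\delta_{bc}).$$
Contracting against $W_{ijkl}$ on the outside and exploiting the total tracelessness of $W$ on any index pair together with the first Bianchi identity $W_{ijkl}+W_{iklj}+W_{iljk}=0$, the quadratic expression regroups into exactly three types of contributions: a cubic-in-$W$ term that collects into $2W_{ijkl}\bigl(2W_{hjkm}W_{hilm}-\tfrac12 W_{ijhm}W_{klhm}\bigr)$, a mixed $W$-$\mathring{Ric}$ piece equal to $2W_{ijkl}W_{ijkh}\mathring{R}_{hl}$, and a scalar-curvature part $\tfrac{2R}{n}|W|^2$. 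Adding these to $|\nabla W|^2$ gives (2.6).

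The main obstacle is combinatorial bookkeeping. The Ricci identity on a four-tensor produces four curvature terms per derivative swap, hence eight in total; each of these splits further upon substituting the $W+\mathring{Ric}+R$ decomposition, generating a large number of raw terms. Many of the $\mathring{Ric}$-coupled contributions cancel pairwise via the first Bianchi identity and the vanishing traces $W_{iaja}=0$; only after this simplification do the specific coefficients in the statement emerge. A secondary care-point is reconciling the sign conventions of the Ricci identity with those implicit in (2.1)--(2.4), so that the harmonic-Weyl relation indeed annihilates the expected terms after the derivatives are commuted.
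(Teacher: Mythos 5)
Your proposal follows essentially the same route as the paper: contract the differentiated second Bianchi identity (2.4) to express $W_{ijkl,mm}$ through $W_{ijkm,lm}$-type terms, kill the $W_{ijkm,ml}$ terms via $\delta W=0$ after commuting derivatives with the Ricci identity, and then substitute the orthogonal decomposition of $Rm$ into the resulting curvature commutators, simplifying with the tracelessness of $W$ and the first Bianchi identity. The only cosmetic difference is that the paper first merges the two second-derivative terms into $2W_{ijkl}W_{ijkm,lm}$ by the pair symmetry of $W$ before commuting, so it handles four curvature commutator terms rather than eight; this does not affect correctness.
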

\begin{remark} (2.6) is a straightforward reworking
of the first author's arguments on Lemma  2.4 in \cite{F2}. When $n=4$, it follows that $W_{ijkl}W_{ijkh}=\frac14|W|^2\delta_{lh}$. Thus from (2.6) we have
\begin{equation*}
\frac12\triangle|W|^2=|\nabla W|^2+2W_{ijkl}(2W_{hjkm}W_{hilm}
-\frac 12W_{ijhm}W_{klhm})+\frac{R}{2}|W|^2.
\end{equation*}
\end{remark}
\begin{proof}
By Ricci identities and (2.4), we obtain
\begin{equation*}
\begin{split}
\triangle|W|^2=&2|\nabla W|^2+2\langle W, \triangle W\rangle=2|\nabla W|^2+2W_{ijkl}W_{ijkl,mm}\\
=&2|\nabla W|^2+2W_{ijkl}(W_{ijkm,lm}+W_{ijml,km})\\
=&2|\nabla W|^2+4W_{ijkl}W_{ijkm,lm}\\
=&2|\nabla W|^2+4W_{ijkl}(W_{ijkm,ml}
+W_{hjkm}R_{hilm}+W_{ihkm}R_{hjlm}+W_{ijhm}R_{hklm}
+W_{ijkh}R_{hmlm})\\
=&2|\nabla W|^2+4W_{ijkl}(W_{hjkm}R_{hilm}
+W_{ihkm}R_{hjlm}+W_{ijhm}R_{hklm}
+W_{ijkh}R_{hmlm})\\
=&2|\nabla W|^2+4W_{ijkl}(W_{hjkm}W_{hilm}
+W_{ihkm}W_{hjlm}
+W_{ijhm}W_{hklm}
+W_{ijkh}W_{hmlm})\\
&
+\frac{4}{n-2}W_{ijkl}[W_{hjkm}(\mathring{R}_{hl}\delta_{im}-\mathring{R}_{hm}\delta_{il}+\mathring{R}_{im}\delta_{hl}-\mathring{R}_{il}\delta_{hm})\\
&+W_{ihkm}(\mathring{R}_{hl}\delta_{jm}-\mathring{R}_{hm}\delta_{jl}+\mathring{R}_{jm}\delta_{hl}-\mathring{R}_{jl}\delta_{hm})+W_{ijhm}(\mathring{R}_{hl}\delta_{km}-\mathring{R}_{hm}\delta_{kl}+\mathring{R}_{km}\delta_{hl}-\mathring{R}_{kl}\delta_{hm})\\
&+W_{ijkh}(\mathring{R}_{hl}\delta_{mm}-\mathring{R}_{hm}\delta_{ml}+\mathring{R}_{mm}\delta_{hl}-\mathring{R}_{ml}\delta_{hm})]
+\frac{4R}{n(n-1)}W_{ijkl}(W_{ljki}+W_{ilkj}+W_{ijlk})
+\frac{4R}{n}|W|^2\\
=&2|\nabla W|^2+4W_{ijkl}(2W_{hjkm}W_{hilm}
-\frac 12W_{ijhm}W_{klhm})+\frac{4R}{n}|W|^2+4W_{ijkl}W_{ijkh}\mathring{R}_{hl}.
\end{split}
\end{equation*}
This completes the proof of this Lemma.
\end{proof}

\begin{lemma}
Let $M^n(n\geq 4)$  be an $n$-dimensional complete  Riemannian manifold
with harmonic Weyl curvature.      Then
\begin{equation}
\frac12\triangle|W|^2\geq|\nabla W|^2-C(n)|W|^3-2\sqrt{\frac{n-1}{n}}|W|^2|\mathring{Ric}|+\frac{2R}{n}|W|^2,
\end{equation}
where $C(n)$ is defined in Theorem 1.3.
\end{lemma}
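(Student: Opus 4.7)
The plan is to start from the pointwise identity in Lemma 2.2, namely
\begin{equation*}
\frac12\triangle|W|^2=|\nabla W|^2+2W_{ijkl}\Bigl(2W_{hjkm}W_{hilm}-\tfrac 12W_{ijhm}W_{klhm}\Bigr)+\frac{2R}{n}|W|^2+2W_{ijkl}W_{ijkh}\mathring{R}_{hl},
\end{equation*}
and then bound the two ``bad'' indefinite terms from below. The Laplacian term and the $\tfrac{2R}{n}|W|^2$ term already appear in the desired inequality, so only the purely algebraic cubic-in-$W$ term and the mixed $W$-$\mathring{Ric}$ term need attention.

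For the mixed term, define the symmetric $2$-tensor $T_{lh}:=W_{ijkl}W_{ijkh}$. Contracting $l=h$ gives $\mathrm{tr}(T)=|W|^2$, so the decomposition $T_{lh}=\mathring{T}_{lh}+\tfrac{|W|^2}{n}\delta_{lh}$ together with the trace-freeness of $\mathring{Ric}$ yields
\begin{equation*}
W_{ijkl}W_{ijkh}\mathring{R}_{hl}=\mathring{T}_{lh}\mathring{R}_{hl}\ \geq\ -|\mathring{T}|\,|\mathring{Ric}|.
\end{equation*}
Viewing $W$ as a matrix $A_{(ijk),l}=W_{ijkl}$, one has $T=A^{\top}\!A$, so $|T|^{2}=\mathrm{tr}((A^{\top}\!A)^{2})\le (\mathrm{tr}(A^{\top}\!A))^{2}=|W|^{4}$; subtracting the trace part gives $|\mathring{T}|^{2}\le \tfrac{n-1}{n}|W|^{4}$. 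Hence $2W_{ijkl}W_{ijkh}\mathring{R}_{hl}\ge -2\sqrt{\tfrac{n-1}{n}}|W|^{2}|\mathring{Ric}|$, which accounts for the middle term in (2.7).

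For the cubic term, one needs the sharp algebraic inequality
\begin{equation*}
\Bigl|\,W_{ijkl}\Bigl(2W_{hjkm}W_{hilm}-\tfrac12 W_{ijhm}W_{klhm}\Bigr)\Bigr|\ \leq\ \tfrac{C(n)}{2}\,|W|^{3},
\end{equation*}
with the explicit constants $C(n)$ listed in Theorem 1.3. In dimensions $n=4$ one invokes the self-dual/anti-self-dual splitting $W=W^{+}\oplus W^{-}$, treats each as a traceless symmetric endomorphism of $\Lambda^{2}_{\pm}$ with three eigenvalues summing to zero, and diagonalizes to reduce the problem to elementary symmetric-function estimates that yield the constant $\tfrac{\sqrt{6}}{2}$. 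In dimension $n=5$ the sharp constant $\tfrac{8}{\sqrt{10}}$ comes from a similar explicit diagonalization argument on the Weyl curvature operator. For $n\ge 6$ the bound follows from the sharp two-term bound on $\langle W*W,W\rangle$ due to Jack–Parker / Huisken that decomposes $W*W$ according to the orthogonal splitting of the curvature operator, giving the two summands $\tfrac{2(n-2)}{\sqrt{n(n-1)}}$ and $\tfrac{n^{2}-n-4}{\sqrt{(n-2)(n-1)n(n+1)}}|W|^{3}$; see the algebraic computations in the references on Einstein manifolds.

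Combining the two lower bounds with Lemma 2.2 yields (2.7). The main obstacle is the cubic estimate: the constants $C(n)$ are sharp and their derivation requires case-by-case algebraic work rather than a uniform argument, whereas the mixed term admits the clean Cauchy–Schwarz bound above.
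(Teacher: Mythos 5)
Your proposal is correct and follows the same skeleton as the paper's own proof: start from the identity (2.6), bound the mixed $W$--$\mathring{Ric}$ term below by $-2\sqrt{(n-1)/n}\,|W|^2|\mathring{Ric}|$, and bound the cubic term below by $-C(n)|W|^3$ via case-by-case sharp algebraic inequalities. The one genuine difference is your treatment of the mixed term: you decompose $T_{lh}=W_{ijkl}W_{ijkh}$ into its trace-free part plus $\tfrac{|W|^2}{n}\delta_{lh}$, apply Cauchy--Schwarz, and use that $T=A^{\top}A$ is positive semidefinite so $|T|^2\le(\mathrm{tr}\,T)^2=|W|^4$, hence $|\mathring T|\le\sqrt{(n-1)/n}\,|W|^2$; the paper instead applies Huisken's eigenvalue bound $|\lambda_i|\le\sqrt{(n-1)/n}\,|\mathring{Ric}|$ directly to $\mathring{Ric}$ together with $T_{hh}\ge 0$. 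Both yield the same constant, and your version avoids invoking the eigenvalue lemma for this term. For the cubic term your account matches the paper for $n=4$ (Huisken's $\tfrac{\sqrt6}{4}|W|^3$ bound) and in structure for $n\ge 6$, though the paper attributes the two summands of $C(n)$ to the Li--Zhao estimate on $S^2V$ and Huisken's cubic trace estimate on $\Lambda^2V$ combined by the triangle inequality, not to Jack--Parker; and for $n=5$ the paper's route is not a bare diagonalization but first uses the Jack--Parker identity $W_{ijkl}W_{hmij}W_{klhm}=2W_{ijlk}W_{jhkm}W_{himl}$ to collapse the two contractions into $\tfrac32 W_{ijkl}W_{hmij}W_{klhm}$ and only then applies $\mathrm{tr}(T^3)\le\frac{m-2}{\sqrt{m(m-1)}}|T|^3$ with $m=\dim\Lambda^2\mathbb{R}^5=10$, which is what produces $C(5)=8/\sqrt{10}$. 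Since you state the correct constants and, like the paper, defer the sharp algebraic inequalities to the literature, there is no gap --- only a looser account of the provenance of $C(5)$ and $C(n)$, $n\ge6$.
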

\begin{remark}  When $n=4$,  by Remark 2.3,  we have
\begin{equation*}
\frac12\triangle|W|^2\geq|\nabla W|^2-\frac{\sqrt{6}}{2}|W|^3+\frac{R}{2}|W|^2.
\end{equation*}
Based on the above inequality, we obtain a integral rigidity for compact $4$-manifold with harmonic Weyl curvature (see \cite{F}).
\end{remark}
\begin{proof}
By the algebraic inequality for $m$-trace-free symmetric two-tensors $T$, i.e., the eigenvalues $\lambda_i$ of $T$ satisfy $|\lambda_i|\leq\sqrt{\frac{m-1}{m}}|T|$ in \cite{Hu},  from (2.6) we get
\begin{equation}
\frac12\triangle|W|^2\geq|\nabla W|^2-2(2W_{ijlk}W_{jhkm}W_{himl}
+\frac 12W_{ijkl}W_{hmij}W_{klhm})+\frac{2R}{n}|W|^2-2\sqrt{\frac{n-1}{n}}|W|^2|\mathring{Ric}|.\end{equation}

Case 1. When $n=4$, it was proved in \cite{Hu} that $$|2W_{ijlk}W_{jhkm}W_{himl}
+\frac 12W_{ijkl}W_{hmij}W_{klhm}|\leq\frac {\sqrt{6}}{4}|W|^3.$$

Case 2. When $n=5$,  Jack and  Parker \cite{JP} have proved that $W_{ijkl}W_{hmij}W_{klhm}=2W_{ijlk}W_{jhkm}W_{himl}$. By the algebraic inequality for $m$-trace-free symmetric two-tensors $T$, i.e., $tr(T^3)\leq\frac{m-2}{\sqrt{m(m-1)}}|T|^3$ and equality holding if and only if $T$ can be diagonalized  with
$(n-1)$-eigenvalues equal to $\lambda$  in \cite{Hu}, we consider $W$ as a self-adjoint operator on $\wedge^2 V$,  and obtain
\begin{eqnarray*}|2W_{ijlk}W_{jhkm}W_{himl}
+\frac 12W_{ijkl}W_{hmij}W_{klhm}|=\frac32|W_{ijkl}W_{hmij}W_{klhm}|\leq\frac {4}{\sqrt{10}}|W|^3.\end{eqnarray*}

Case 3. When $n\geq 6$, considering $W$ as a self-adjoint operator on $S^2 V$ and $\wedge^2 V$, making use of the inequality proved by Li and Zhao \cite{LZ}  (see also \cite{Z})
and the Huisken's inequality in front, we have
\begin{eqnarray*}|2W_{ijlk}W_{jhkm}W_{himl}
+\frac 12W_{ijkl}W_{hmij}W_{klhm}|\leq2|W_{ijlk}W_{jhkm}W_{himl}|+\frac 12|W_{ijkl}W_{hmij}W_{klhm}|\\
\leq[\frac{(n-2)}{\sqrt{n(n-1)}}+ \frac{n^2-n-4}{2\sqrt{(n-2)(n-1)n(n+1)}}]|W|^3.\end{eqnarray*}

From (2.8) and Cases 1, 2 and 3, we complete the proof of this Lemma.
\end{proof}
\begin{lemma}
Let $M^n(n\geq 4)$  be an $n$-dimensional complete  Riemannian manifold. The following estimate holds
\begin{eqnarray*}
\left|-W_{ijkl}\mathring{R}_{ik}\mathring{R}_{jl}+\frac{n}{n-2}\mathring{R}_{ij}\mathring{R}_{jk}\mathring{R}_{ki}\right|\leq
\sqrt{\frac{n-2}{2(n-1)}}\left(|W|^2+\frac{2n}{n-2}|\mathring{Ric}|^2\right)^{\frac12}|\mathring{Ric}|^2.
\end{eqnarray*}
\end{lemma}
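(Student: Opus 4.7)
The plan is to estimate the square of the quantity inside the absolute value, applying AM--GM with a free parameter $\beta > 0$ to the unavoidable cross term and then bounding the two resulting summands by a refined Huisken-type inequality for $u := W_{ijkl}\mathring R_{ik}\mathring R_{jl}$ and a Cauchy-Schwarz inequality for $v := \mathring R_{ij}\mathring R_{jk}\mathring R_{ki}$. I will then tune $\beta$ so that the unwanted tails in both bounds cancel exactly.

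Concretely, since $|2uv| \le \beta u^2 + v^2/\beta$, one obtains
\[
\Bigl(-u + \tfrac{n}{n-2}v\Bigr)^2 \le \Bigl(1 + \tfrac{n\beta}{n-2}\Bigr)u^2 + \Bigl(\tfrac{n}{\beta(n-2)} + \tfrac{n^2}{(n-2)^2}\Bigr)v^2.
\]
Introduce the trace-free symmetric $2$-tensor $\hat S_{ij} := \mathring R_{ik}\mathring R_{kj} - \tfrac{|\mathring R|^2}{n}\delta_{ij}$ associated to $\mathring{Ric}^{\,2}$, and set $Z_{ijkl} := \mathring R_{ik}\mathring R_{jl} - \mathring R_{il}\mathring R_{jk}$, so that $u = \tfrac12 W_{ijkl}Z_{ijkl}$. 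A standard orthogonal decomposition of $Z$ into its Weyl, Ricci, and scalar parts (using that $W$ is trace-free) yields $|Z^W|^2 = \tfrac{2(n-2)}{n-1}|\mathring R|^4 - \tfrac{2n}{n-2}|\hat S|^2$, so Cauchy-Schwarz gives the refined Huisken bound
\[
u^2 \le \tfrac{n-2}{2(n-1)}|W|^2|\mathring R|^4 - \tfrac{n}{2(n-2)}|W|^2|\hat S|^2.
\]
Also, $v = \mathring R_{ij}\hat S_{ij}$, so Cauchy-Schwarz gives $v^2 \le |\mathring R|^2|\hat S|^2$.

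Substituting these two estimates into the AM--GM display and choosing $\beta = 2|\mathring R|^2/|W|^2$ (with the degenerate cases $|W| = 0$ or $|\mathring{Ric}| = 0$ handled directly by Huisken's cubic inequality or triviality), a short arithmetic verification shows the combined coefficient of $|\hat S|^2$ vanishes; the remaining terms sum to $\tfrac{n-2}{2(n-1)}|W|^2|\mathring R|^4 + \tfrac{n}{n-1}|\mathring R|^6$, which is exactly $\tfrac{n-2}{2(n-1)}\bigl(|W|^2 + \tfrac{2n}{n-2}|\mathring R|^2\bigr)|\mathring R|^4$. The main obstacle is identifying the refined Huisken inequality: its $-\tfrac{n}{2(n-2)}|W|^2|\hat S|^2$ correction is precisely what matches the $|\hat S|^2$-tail from the Cauchy-Schwarz bound on $v$, so their combined $|\hat S|^2$-dependence cancels for the correct $\beta$. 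Using only the classical Huisken estimates with the triangle inequality produces strictly larger constants that cannot reach the sharp right-hand side.
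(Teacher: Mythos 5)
Your proof is correct and rests on the same core computation as the paper's: the orthogonal decomposition of $\mathring{Ric}\circledwedge \mathring{Ric}$ (your $Z$) into trace-free, Ricci and scalar parts together with the resulting norm identity, so that your ``refined Huisken bound'' for $u$ is exactly the paper's Cauchy--Schwarz estimate against the totally trace-free component $T$, and your bound $v^2\leq|\mathring{Ric}|^2|\hat S|^2$ is its Cauchy--Schwarz estimate against the $V'$-component. Your tuned AM--GM parameter $\beta=2|\mathring{Ric}|^2/|W|^2$ plays precisely the role of the paper's rescaling $V'\mapsto\sqrt{n/2}\,V'$ before a single global application of Cauchy--Schwarz, so the two arguments are equivalent repackagings of one another.
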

\begin{remark}
We follow these proofs of Proposition 2.1 in \cite{Ca2} and Lemma 4.7 in \cite{B2} to  prove this lemma which was proved in \cite{F2}. For completeness, we also write it out.
\end{remark}
\begin{proof}
First, we have
$$(\mathring{Ric}\circledwedge g)_{ijkl}=\mathring{R}_{ik}g_{jl}-\mathring{R}_{il}g_{jk}+\mathring{R}_{jl}g_{ik}-\mathring{R}_{jk}g_{il},$$
$$(\mathring{Ric}\circledwedge \mathring{Ric})_{ijkl}=2(\mathring{R}_{ik}\mathring{R}_{jl}-\mathring{R}_{il}\mathring{R}_{jk}),$$
where $\circledwedge$ denotes the Kulkarni-Nomizu product.
An easy computation shows
$$W_{ijkl}\mathring{R}_{ik}\mathring{R}_{jl}=\frac{1}{4}W_{ijkl}(\mathring{Ric}\circledwedge \mathring{Ric})_{ijkl},$$
$$\mathring{R}_{ij}\mathring{R}_{jk}\mathring{R}_{ik}=-\frac{1}{8}(\mathring{Ric}\circledwedge g)_{ijkl}(\mathring{Ric}\circledwedge \mathring{Ric})_{ijkl}.$$
Hence we get the following equation
\begin{equation}
-W_{ijkl}\mathring{R}_{ik}\mathring{R}_{jl}+\frac{n}{(n-2)}\mathring{R}_{ij}\mathring{R}_{jk}\mathring{R}_{ik}=-\frac{1}{4}\left(W+\frac{n}{2(n-2)}\mathring{Ric}\circledwedge g\right)_{ijkl}(\mathring{Ric}\circledwedge \mathring{Ric})_{ijkl}.
\end{equation}
Since $\mathring{Ric}\circledwedge \mathring{Ric}$ has the same symmetries with the Riemannian curvature tensor, it can be orthogonally decomposed as
$$\mathring{Ric}\circledwedge \mathring{Ric}=T+V'+U'.$$
Here $T$ is totally trace-free, and
$$V'_{ijkl}=-\frac{2}{n-2}\left(\mathring{Ric}^2\circledwedge g\right)_{ijkl}+\frac{2}{n(n-2)}|\mathring{Ric}|^2(g\circledwedge g)_{ijkl},$$
$$U'_{ijkl}=-\frac{1}{n(n-1)}|\mathring{Ric}|^2(g\circledwedge g)_{ijkl},$$
where $\left(\mathring{Ric}^2\right)_{ik}=\mathring{R}_{ip}\mathring{R}_{kp}$. Taking the squared norm we obtain
$$|\mathring{Ric}\circledwedge \mathring{Ric}|^2=8|\mathring{Ric}|^4-8|\mathring{Ric}^2|^2,$$
$$|V'|^2=\frac{16}{n-2}|\mathring{Ric}^2|^2-\frac{16}{n(n-2)}|\mathring{Ric}|^4,$$
$$|U'|^2=\frac{8}{n(n-1)}|\mathring{Ric}|^4.$$
In particular, one has
\begin{equation}
|T|^2+\frac{n}{2}|V'|^2=|\mathring{Ric}\circledwedge \mathring{Ric}|^2+\frac{n-2}{2}|V'|^2-|U'|^2=\frac{8(n-2)}{n-1}|\mathring{Ric}|^4.\label{77777}
\end{equation}

We now estimate the right hand side of (2.9). Using  Cauchy-Schwarz inequality, \eqref{77777} and  the fact that $W$ and $T$ are totally trace-free we obtain
\begin{eqnarray*}
\left|\left(W+\frac{n}{2(n-2)}\mathring{Ric}\circledwedge g\right)_{ijkl}(\mathring{Ric}\circledwedge \mathring{Ric})_{ijkl}\right|^2
&=&\left|\left(W+\frac{n}{2(n-2)}\mathring{Ric}\circledwedge g\right)_{ijkl}(T+V')_{ijkl}\right|^2\\
&=&\left|\left(W+\frac{\sqrt{2n}}{2(n-2)}\mathring{Ric}\circledwedge g\right)_{ijkl}\left(T+\sqrt{\frac{n}{2}}V'\right)_{ijkl}\right|^2\\
&\leq&\left|W+\frac{\sqrt{2n}}{2(n-2)}\mathring{Ric}\circledwedge g\right|^2\left(|T|^2+\frac{n}{2}|V'|^2\right)\\
&=&\frac{8(n-2)}{n-1}|\mathring{Ric}|^4\left(|W|^2+\frac{2n}{n-2}|\mathring{Ric}|^2\right).
\end{eqnarray*}
This estimate together with (2.9) concludes this proof.
\end{proof}
\section{Proofs of Theorems}
In this section,  we  prove our main theorems and corollaries by using lemmas in Section 2.
\begin{proposition}\label{99999}
Let $(M^n, g)(n\geq4)$  be an $n$-dimensional   compact Riemannian manifold  with harmonic Weyl curvature and positive scalar curvature. If
\begin{equation}\int_{M^n}|\nabla \mathring{Ric}|^2\geq\frac{(n-2)^2}{4n(n-1)}\int_{M^n}|\nabla R|^2\end{equation}
and
\begin{equation}
|W|^2+\frac{2n}{n-2}|\mathring{Ric}|^{2}
<\frac{2}{{(n-2)(n-1)}}R^2,
\end{equation}
then $M^n$ is an Einstein manifold.
In particular, if the pinching condition in (3.2) is weakened to \begin{equation}
|W|^2+\frac{2n}{n-2}|\mathring{Ric}|^{2}
<\frac{4}{{n^2C(n)^2}}R^2,
\end{equation} then $M^n$ is isometric to a quotient of  $\mathbb{S}^n$.
\end{proposition}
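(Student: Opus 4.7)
The plan is to integrate the Bochner-type identity of Lemma 2.1 against the volume form, use the contracted Bianchi consequence $\mathring R_{ij,j}=\tfrac{n-2}{2n}R_{,i}$ from (2.2) to rewrite the Hessian-of-$R$ term, control the indefinite Weyl-Ricci contraction by the algebraic Lemma 2.4, and combine everything with the pinching (3.2) and the gradient hypothesis (3.1) to kill $\mathring{Ric}$. Once $(M^n,g)$ is shown to be Einstein, a second integration---this time of Lemma 2.3---under the stronger pinching (3.3) will kill $W$ and leave a space of constant positive curvature.

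In detail, integrating Lemma 2.1 over $M$ and integrating by parts via (2.2) to get $\int_M\mathring R_{ij}R_{,ij}=-\tfrac{n-2}{2n}\int_M|\nabla R|^2$ yields
\begin{equation*}
0=\int_M|\nabla\mathring{Ric}|^2-\frac{(n-2)^2}{4n(n-1)}\int_M|\nabla R|^2+\int_M\Big(W_{kijl}\mathring R_{ij}\mathring R_{kl}+\tfrac{n}{n-2}\mathring R_{ij}\mathring R_{jl}\mathring R_{li}+\tfrac{R}{n-1}|\mathring{Ric}|^2\Big).
\end{equation*}
Using the first Bianchi identity together with the symmetry and tracelessness of $\mathring{Ric}$, one verifies $W_{kijl}\mathring R_{ij}\mathring R_{kl}=-W_{ijkl}\mathring R_{ik}\mathring R_{jl}$, so Lemma 2.4 applies to the last integrand and gives the pointwise bound
\begin{equation*}
W_{kijl}\mathring R_{ij}\mathring R_{kl}+\tfrac{n}{n-2}\mathring R_{ij}\mathring R_{jl}\mathring R_{li}+\tfrac{R}{n-1}|\mathring{Ric}|^2\geq|\mathring{Ric}|^2\Bigl(\tfrac{R}{n-1}-\sqrt{\tfrac{n-2}{2(n-1)}}\sqrt{|W|^2+\tfrac{2n}{n-2}|\mathring{Ric}|^2}\Bigr).
\end{equation*}
Squaring shows that the pinching (3.2) is exactly the strict positivity of this bracket wherever $\mathring{Ric}\ne 0$, so the third integral is nonnegative with strict sign on $\{\mathring{Ric}\neq 0\}$. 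The gradient hypothesis (3.1) makes the first two integrals jointly nonnegative, and so the identity above forces each piece to vanish; in particular $\mathring{Ric}\equiv 0$, i.e., $g$ is Einstein.

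For the second assertion, being Einstein forces $R$ to be a positive constant, and with $\mathring{Ric}=0$ Lemma 2.3 reduces to $\tfrac12\Delta|W|^2\geq|\nabla W|^2-C(n)|W|^3+\tfrac{2R}{n}|W|^2$. Integrating gives $0\geq\int_M|\nabla W|^2+\int_M|W|^2\bigl(\tfrac{2R}{n}-C(n)|W|\bigr)$, and after setting $\mathring{Ric}=0$ the strengthened pinching (3.3) becomes $|W|<\tfrac{2R}{nC(n)}$, making the bracket strictly positive wherever $|W|>0$; hence $W\equiv 0$. Together with $\mathring{Ric}=0$ and $R>0$, this means $(M^n,g)$ has constant positive sectional curvature, so is isometric to a quotient of $\mathbb{S}^n$. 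The main obstacle is the sign/index bookkeeping needed to identify the Weyl-Ricci contraction in Lemma 2.1 with the combination estimated in Lemma 2.4; everything else is a direct application of Bochner integration and the pinching hypotheses.
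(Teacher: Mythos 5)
Your argument is correct and follows essentially the same route as the paper: integrate the Bochner formula for $|\mathring{Ric}|^2$, rewrite the Hessian term via $\mathring R_{ij,j}=\tfrac{n-2}{2n}R_{,i}$, bound the cubic/Weyl terms by the algebraic estimate (the paper's Lemma 2.6), and invoke (3.1)--(3.2) to force $\mathring{Ric}=0$, then use the differential inequality for $|W|^2$ under (3.3) to force $W=0$. The only cosmetic differences are that the paper kills $W$ via subharmonicity of $|W|^2$ rather than direct integration and cites an external rigidity theorem for the final space-form identification, while you conclude directly from $W=\mathring{Ric}=0$; both are valid.
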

\begin{proof}
Integrating (2.5) by parts over $M^n$ and using (2.2) we get
\begin{equation}
\begin{split}
0&=\int_{M^n}|\nabla \mathring{Ric}|^2-\frac{n-2}{2(n-1)}\int_{M^n}\mathring{R}_{ij,j}R_{,i}+\int_{M^n} W_{kijl}\mathring{R}_{ij}\mathring{R}_{kl}+\frac{n}{n-2}\int_{M^n}\mathring{R}_{ij}\mathring{R}_{jl}\mathring{R}_{li}
+\frac{1}{n-1}\int_{M^n} R|\mathring{Ric}|^2\\
&=\int_{M^n}|\nabla \mathring{Ric}|^2-\frac{(n-2)^2}{4n(n-1)}\int_{M^n}|\nabla R|^2+\int_{M^n} \left(-W_{ikjl}\mathring{R}_{ij}\mathring{R}_{kl}+\frac{n}{n-2}\mathring{R}_{ij}\mathring{R}_{jl}\mathring{R}_{li}\right)
+\frac{1}{n-1}\int_{M^n} R|\mathring{Ric}|^2.
\end{split}
\end{equation}
Substituting (3.1) and Lemma 2.6 into (3.4) we get
\begin{equation}
\begin{split}
0&\geq-\sqrt{\frac {n-2}{2(n-1)}}\int_{M^n}\left(|W|^2+\frac{2n}{n-2}|\mathring{Ric}|^2\right)^{\frac 12}|\mathring{Ric}|^2
+\frac{1}{n-1}\int_{M^n} R|\mathring{Ric}|^2\\
&=\int_{M^n}\left[\frac{R}{n-1}-\sqrt{\frac {n-2}{2(n-1)}}\left(|W|^2+\frac{2n}{n-2}|\mathring{Ric}|^2\right)^{\frac 12}\right]|\mathring{Ric}|^2.
\end{split}
\end{equation}
Hence the pinching condition (3.2) implies that $\mathring{Ric}=0$, i.e., $M^n$ is Einstein.

If (3.3) holds,  then $M^n$ is Einstein. Thus it is easy to see that
\begin{equation}
\frac12\triangle|W|^2\geq|\nabla W|^2-C(n)|W|^3+\frac{2R}{n}|W|^2.
\end{equation}
If $|W|<\frac{2}{nC(n)}R$, from (3.6) we have  $|W|$ is a  subharmonic function, and $|W|$ is constant. Hence from (3.6) we know that $W=0$, i.e., $M^n$ is locally conformally flat. By the  rigidity result for positively curved Einstein manifolds (see Theorem 1.1 in \cite{FX2}),
we can directly get that $M^n$ is isometric to a quotient of the round $\mathbb{S}^n$.

Then we complete the proof of Propositon \ref{99999}.
\end{proof}

\begin{proof}[{\bf Proofs of Theorems
1.1 and 1.3}]We compute
  \begin{eqnarray}
|\nabla A|^2&=&|\nabla Ric|^2-\frac{1}{n-1}|\nabla R|^2+\frac{|\nabla R|^2}{4(n-1)^2}n\\\nonumber
&=&|\nabla Ric|^2-\frac{3n-4}{4(n-1)^2}|\nabla R|^2\\\nonumber
&=&|\nabla \mathring{Ric}|^2+\frac{(n-2)^2}{4n(n-1)^2}|\nabla R|^2,
  \end{eqnarray}
and
\begin{equation}
|\nabla trA|^2=\frac{(n-2)^2}{4(n-1)^2}|\nabla R|^2.
\end{equation}
Since $\sigma_2(A_g)$ is a
positive constant,  the  inequality of Kato type due to Li  and Simon\cite{{HLS},{L},{SI}}, i.e.,
\begin{equation}|\nabla A|^2\geq|\nabla trA|^2\end{equation}
holds. From (3.7) and (3.8), (3.9) implies that
\begin{equation*}\int_{M^n}|\nabla \mathring{Ric}|^2\geq\frac{(n-2)^2}{4n(n-1)}\int_{M^n}|\nabla R|^2.\end{equation*}
By (1.1), the pinching conditions (1.2) and (1.3) are equivalent to (3.2) and (3.3), respectively. By Proposition 3.1, we can complete the proofs of Theorems 1.1 and 1.3.
\end{proof}
\begin{remark}
Since  Riemannian manifolds  with harmonic curvature has constant scalar curvature, (3.1) naturally holds. Proposition 3.1 improves Theorem 1.1 in \cite{FXT}.
\end{remark}

\begin{proof}[\bf Proof of Corollary 1.4]
When $n=4$, the pinching condition (1.2) in Theorem 1.1 is reduced to  (1.4) in Corollary 1.4. By Theorem 1.1, $M^4$ is Einstein. Thus by (1.1), (1.4) is equivalent to
\begin{equation*}
|W|^2<\frac{R^2}{3}.
\end{equation*}
By Theorem 1.8 in \cite{F} (see also \cite{F2}), we obtain that
$M^4$ is isometric to a quotient of the round $\mathbb{S}^4$ or a  $\mathbb{CP}^2$ with the Fubini-Study metric. Then we complete the proof of Corollary \ref{66666}.
\end{proof}

\begin{proof}[{\bf Proofs of Corollary
1.7}]According to  Theorem 1.1 in \cite{CGY}, there exists a conformal metric $\tilde{g}$ such that  $\sigma_2(A_{\tilde{g}})$ is a
positive constant and $R_{\tilde{g}}$ is positive. Hence by Corollary 1.5, we can complete the proof of Corollary 1.7.
\end{proof}
\begin{proof}[{\bf Proof of Corollary
1.9}]
When $M^n$ is locally conformally flat, the pinching condition (1.2) in Theorem 1.1 is reduced to (1.5) in Corollary 1.9.
 By Theorem 1.1, $M^n$  is isometric to a quotient of the round $\mathbb{S}^n.$ Then we complete the proof of Corollary \ref{66666666}.
\end{proof}

\begin{proof}[{\bf Proof of Theorem
1.11}]
By the Kato inequality $|\nabla W|^2\geq \frac{n+1}{n-1}|\nabla |W||^2$ (see \cite{Br}) and (2.7), we have
\begin{equation}
\frac12\triangle|W|^2\geq\frac{n+1}{n-1}|\nabla |W||^2-C(n)|W|^3-2\sqrt{\frac{n-1}{n}}|W|^2|\mathring{Ric}|+\frac{2R}{n}|W|^2.
\end{equation}
We rewrite (3.10) as
\begin{equation}
|W|\triangle|W|\geq\frac{2}{n-1}|\nabla |W||^2-C(n)|W|^3-2\sqrt{\frac{n-1}{n}}|W|^2|\mathring{Ric}|+\frac{2R}{n}|W|^2
\end{equation}
in the sense of distributions.
Setting $u=|W|$, we compute from (3.11)
\begin{equation}
\begin{split}
u^{\gamma}\triangle u^{\gamma}&=u^{\gamma}\left(\gamma(\gamma-1)u^{\gamma-2}|\nabla u|^2+\gamma u^{\gamma-1}\triangle u\right)\\
&=\frac{\gamma-1}{\gamma}|\nabla u^{\gamma}|^2+\gamma
u^{2\gamma-2}u\triangle u\\
&\geq\left(1-\frac{n-3}{(n-1)\gamma}\right)|\nabla u^{\gamma}|^2-C(n)\gamma u^{2\gamma+1}-2\sqrt{\frac{n-1}{n}}\gamma u^{2\gamma}|\mathring{Ric}|+\frac{2R}{n}\gamma u^{2\gamma}.
\end{split}
\end{equation}
Integrating (3.12) by parts over $M^n$, it follows that
\begin{equation}
0\geq\left(2-\frac{n-3}{(n-1)\gamma}\right)\int_{M^n} |\nabla u^{\gamma}|^2-C(n)\gamma \int_{M^n} u^{2\gamma+1}-2\sqrt{\frac{n-1}{n}}\gamma\int_{M^n} u^{2\gamma}|\mathring{Ric}|+\frac{2}{n}\gamma\int_{M^n} R u^{2\gamma}.
\end{equation}
For $2-\frac{n-3}{(n-1)\gamma}>0$, by the definition of Yamabe constant and (3.13), we can obtain
\begin{equation}
\begin{split}
0\geq\left[(2-\frac{n-3}{(n-1)\gamma})\frac{n-2}{4(n-1)}Y(M^n,[g])-C_2(n)\gamma\left(\int_{M^n} |W|^\frac{n}{2}\right)^\frac{2}{n}-2\sqrt{\frac{n-1}{n}}\gamma\left(\int_{M^n}|\mathring{Ric}|^\frac{n}{2}\right)^{\frac 2n}\right]
\left(\int_{M^n}  |W|^{\frac{2n\gamma}{n-2}}\right)^{\frac{n-2}{n}}\\
+\frac {8(n-1)\gamma+\frac{n(n-2)(n-3)}{(n-1)\gamma}-2n(n-2)}{4n(n-1)}\int_{M^n} R|W|^{2\gamma}.
\end{split}
\end{equation}

Case 1. If $n=5$,  choose $\gamma=\frac12$. It follows from (3.14) that
\begin{equation}
\begin{split}
0\geq\left[\frac{3}{8}Y(M^n,[g])-C_2(5)\left(\int_{M^5} |W|^\frac{5}{2}\right)^\frac{2}{5}-2\sqrt{\frac{4}{5}}\left(\int_{M^5}|\mathring{Ric}|^{\frac{5}{2}}\right)^{\frac 25}\right]
\left(\int_{M^5}  |W|^{\frac{5}{3}}\right)^{\frac{3}{5}}\\
+\frac {1}{80}\int_{M^5} R|W|.
\end{split}
\end{equation}
From (3.15), the pinching condition (1.6) implies that $M^5$ is locally conformal flat. Moreover,  we get the same conclusion if we assume just the weak inequality in (1.6).

Case 2. If $n\neq 5$,  choose $\frac 1\gamma=\frac{n-1}{n-3}(1+\sqrt{1-\frac {8(n-3)}{(n-2)n}})$. From (3.14), we get
\begin{equation}
0\geq\left[\frac{2}{n}Y(M^n,[g])-C_2(n)\left(\int_{M^n} |W|^\frac{n}{2}\right)^\frac{2}{n}-2\sqrt{\frac{n-1}{n}}\left(\int_{M^n}|\mathring{Ric}|^{\frac{n}{2}}\right)^{\frac 2n}\right]
\left(\int_{M^n}  |W|^{\frac{2n\gamma}{n-2}}\right)^{\frac{n-2}{n}}.
\end{equation}
From (3.16), the pinching condition (1.6) implies that $M^n$ is locally conformal flat. Then we complete the proof of Theorem \ref{5555555}.
\end{proof}

\bibliographystyle{amsplain}

\end{document}